\theoremstyle{plain}
\newtheorem{example}{Example}
\newtheorem{lemma}{Lemma}
\newtheorem{theorem}{Theorem}
\numberwithin{equation}{section}
\begin{document}
\title[Bour's surfaces in $\mathbb{L}^{3}$]{Bour's spacelike maximal and
timelike minimal surfaces in the 3-dimensional Minkowski space }
\author{Erhan G\"{u}ler}
\address{Bart\i n University, Faculty of Science, Department of Mathematics,
Bart\i n, Turkey\\
}
\email[E. G\"{u}ler]{ergler@gmail.com}
\date{}
\subjclass[2000]{Primary 53A35; Secondary 53C42}
\keywords{Bour's surface, spacelike surface, timelike surface, Weierstrass
representation}
\dedicatory{}
\thanks{This paper is in final form and no version of it will be submitted
for publication elsewhere}

\begin{abstract}
Bour's\ minimal surface has remarkable properties in three dimensional
Minkowski space. We reveal the definite and indefinite cases of the Bour's
surface using Weierstrass representations, and give some differential
geometric properties of the astonishing maximal and minimal surfaces.
\end{abstract}

\maketitle

\section{Introduction}

The origins of minimal surface theory can be traced back to 1744 with the
Swedish mathematician Leonhard Euler's paper, and to the 1760 French
mathematician Joseph Louis Lagrange's paper.

A \textit{minimal surface} in three dimensional Euclidean space $\mathbb{E}%
^{3}$ is a regular surface for which the mean curvature vanishes
identically, firstly defined by Lagrange in 1760.

Classical minimal surfaces in Riemannian geometry are known by nearly all
the mathematicians, especially the geometers. However, there is a little
knowledge about the Bour's minimal surface. In 1862, the French
mathematician Jacques Edmond \'{E}mile Bour used semigeodesic coordinates
and found a number of new cases of deformations of surfaces. He gave a well
known theorem about the helicoidal and rotational surfaces. And also the
Bour-Enneper equation\ (today called the sine-Gordon wave equation)\ used in
soliton theory and\ quantum field theories in Physics was first set down by
Bour.

Minimal surfaces applicable onto a rotational surface were first determined
by Bour \cite{Bour} in 1862. These surfaces have been called $\ \mathfrak{B}%
_{m}$ (following Haag)\ to emphasize the value of $m$. We see the papers
dealing with the $\mathfrak{B}_{m}$ in the literature:

Bour, E. Theorie de la deformation des surfaces. Journal de l'\^{E}cole
Imperiale Polytechnique, tome 22, cahier 39 (1862), pp. 99-109.

Schwarz, H. A. Miscellen aus dem Gebiete der Minimalfl\"{a}chen. Journal de
Crelle, vol. 80 (1875), p. 295, published also in Gesammelte Mathematische
Abhandlungen.

Ribaucour, A. Etude sur les elassoides ou surfaces a courbure moyenne nulle.
Memoires Couronnes de l'Academie Royale de Belgique, vol. XLIV (1882),
chapter XX, pp. 215-224.

Demoulin, A. Bulletin des Sciences Mathematiques (2), vol. XXI (1897), pp.
244-252.

Haag, J. Bulletin des Sciences Mathematiques (2), vol. XXX (1906), pp.
75-94, also pp. 293-296.

St\"{u}bler, E. Mathematische Annalen, vol. 75 (1914), pp. 148-176.

Whittemore, J. K. Minimal surfaces applicable to surfaces of revolution.
Ann. of Math. (2) 19 (1917), no. 1, 1--20.

All real minimal surfaces applicable to rotational surfaces setting%
\begin{equation*}
F\mathfrak{(}s)=Cs^{m-2}
\end{equation*}%
in the Weierstrass representation equations, where $s,C\in \mathbb{C}$, $%
m\in \mathbb{R}$, and $F\mathfrak{(}s)$ is an analytic function. For $C=1,$ $%
m=0$ we obtain the Catenoid, $C=i,$ $m=0,$ the right Helicoid, $C=1,$ $m=2,$
the Enneper's surface (see also \cite{Gray, Nitsche}).

The Bour's surface has not been studied up till now in three dimensional
Minkowski space $\mathbb{L}^{3}$. In this paper, we reveal the Bour's
surface in $\mathbb{L}^{3}.$

Next, we focus on the definite (resp. indefinite) case of the Bour's maximal
(resp. minimal) surface in three dimensional Minkowski space $\mathbb{L}^{3}$%
.

\section{Bour's spacelike maximal surface $\mathfrak{B}_{m}$}

Throughout this work, we shall identify a vector $\overrightarrow{\mathbf{v}}%
=\left( u,v,w\right) $ with its transpose, the surfaces will be smooth, and
simply connected.

\subsection{Definite case}

Let $\mathbb{L}^{3}$ be a $3$-dimensional Minkowski space with natural
Lorentzian metric%
\begin{equation*}
\left\langle \text{ }.\text{ },\text{ }.\text{ }\right\rangle
=dx^{2}+dy^{2}-dz^{2}.
\end{equation*}%
A vector $\overrightarrow{\mathbf{v}}$ in $\mathbb{L}^{3}$ is called
spacelike if $\left\langle \overrightarrow{\mathbf{v}},\overrightarrow{%
\mathbf{v}}\right\rangle >0$ or $\overrightarrow{\mathbf{v}}=0,$ timelike if 
$\left\langle \overrightarrow{\mathbf{v}},\overrightarrow{\mathbf{v}}%
\right\rangle <0,$ lightlike if $\overrightarrow{\mathbf{v}}\neq 0$
satisfies $\left\langle \overrightarrow{\mathbf{v}},\overrightarrow{\mathbf{v%
}}\right\rangle =0.$

A surface in $\mathbb{L}^{3}$ is called a spacelike (resp. timelike,
degenere (lightlike)) if the induced metric on the surface is a positive
definite Riemannian (resp. Lorentzian, degenere) metric. A spacelike surface
with vanishing mean curvature is called a \textit{maximal surface} in three
dimensional Minkowski space.

The Weierstrass representation theorem for minimal surfaces in $\mathbb{E}%
^{3}$ is discovered by K. Weierstrass in 1866 \cite{Weierstrass}. Next, we
give it for the maximal surfaces in $\mathbb{L}^{3}.$

\begin{theorem}
(\textit{Weierstrass }representation for maximal surfaces in $\mathbb{L}^{3}$%
\textit{). }Let $\mathfrak{F}$ and $\mathcal{G}$ be two holomorphic
functions defined on a simply connected open subset $U$ of $\mathbb{C}$ such
that $\mathfrak{F}$ does not vanish and $\left\vert \mathcal{G}\right\vert
\neq 1$ on $U$. Then the map%
\begin{equation*}
\mathbf{x}\left( \zeta \right) =\func{Re}\int^{\zeta }\left( 
\begin{array}{c}
\mathfrak{F}\left( 1+\mathcal{G}^{2}\right) \\ 
i\text{ }\mathfrak{F}\left( 1-\mathcal{G}^{2}\right) \\ 
2\mathfrak{F}\mathcal{G}%
\end{array}%
\right) d\zeta
\end{equation*}%
is a conformal immersion of $U$ into $\mathbb{L}^{3}$ whose image is a
maximal surface.
\end{theorem}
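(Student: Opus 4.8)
The plan is to run the classical Osserman-type verification, keeping track of the Lorentzian signature $(+,+,-)$. Set $\phi=(\phi_{1},\phi_{2},\phi_{3})=\bigl(\mathfrak{F}(1+\mathcal{G}^{2}),\ i\,\mathfrak{F}(1-\mathcal{G}^{2}),\ 2\,\mathfrak{F}\mathcal{G}\bigr)$, a triple of holomorphic functions on $U$. Since $U$ is simply connected, each integral $\int^{\zeta}\phi_{k}\,d\zeta$ is path-independent, so $\Phi:=\int^{\zeta}\phi\,d\zeta\colon U\to\mathbb{C}^{3}$ is well defined and holomorphic, and $\mathbf{x}=\operatorname{Re}\Phi$ is smooth. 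Writing $\zeta=u+iv$ and using $\Phi'=\phi$ (so $\partial_{u}\Phi=\phi$ and $\partial_{v}\Phi=i\phi$), one gets $\mathbf{x}_{u}=\operatorname{Re}\phi=\tfrac{1}{2}(\phi+\bar\phi)$ and $\mathbf{x}_{v}=-\operatorname{Im}\phi=\tfrac{i}{2}(\phi-\bar\phi)$; equivalently $\phi=\mathbf{x}_{u}-i\,\mathbf{x}_{v}$.

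Next I would record the two algebraic identities that carry the whole argument. Expanding directly, $\phi_{1}^{2}+\phi_{2}^{2}-\phi_{3}^{2}=\mathfrak{F}^{2}\bigl[(1+\mathcal{G}^{2})^{2}-(1-\mathcal{G}^{2})^{2}-4\mathcal{G}^{2}\bigr]=0$, and, using $|a+b|^{2}+|a-b|^{2}=2|a|^{2}+2|b|^{2}$, $|\phi_{1}|^{2}+|\phi_{2}|^{2}-|\phi_{3}|^{2}=|\mathfrak{F}|^{2}\bigl(2+2|\mathcal{G}|^{4}-4|\mathcal{G}|^{2}\bigr)=2|\mathfrak{F}|^{2}\bigl(1-|\mathcal{G}|^{2}\bigr)^{2}$. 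Combining $\mathbf{x}_{u},\mathbf{x}_{v}$ above with the complex-bilinear extension of $\langle\cdot,\cdot\rangle$ (so $\langle a,b\rangle=a_{1}b_{1}+a_{2}b_{2}-a_{3}b_{3}$ with no conjugation, while $\langle a,\bar a\rangle=|a_{1}|^{2}+|a_{2}|^{2}-|a_{3}|^{2}$), one finds $\langle\phi,\phi\rangle=E-G-2iF$, so the coefficients of the induced first fundamental form are $F=\langle\mathbf{x}_{u},\mathbf{x}_{v}\rangle=0$ and $E=\langle\mathbf{x}_{u},\mathbf{x}_{u}\rangle=\langle\mathbf{x}_{v},\mathbf{x}_{v}\rangle=G=\tfrac{1}{2}(|\phi_{1}|^{2}+|\phi_{2}|^{2}-|\phi_{3}|^{2})$, the equalities $E=G$ and $F=0$ being equivalent to $\langle\phi,\phi\rangle=\phi_{1}^{2}+\phi_{2}^{2}-\phi_{3}^{2}=0$.

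I would then read off the metric consequences. Because $\mathfrak{F}$ never vanishes and $|\mathcal{G}|\neq 1$ on $U$, we have $E=G=|\mathfrak{F}|^{2}(1-|\mathcal{G}|^{2})^{2}>0$; hence $\mathbf{x}_{u},\mathbf{x}_{v}$ are linearly independent and $\mathbf{x}$ is an immersion, it is conformal since $E=G>0$ and $F=0$, and its induced metric $E\,(du^{2}+dv^{2})$ is positive definite, i.e.\ the image is a spacelike surface. Finally, each coordinate of $\mathbf{x}=\operatorname{Re}\Phi$ is the real part of a holomorphic function, hence harmonic, so $\mathbf{x}_{uu}+\mathbf{x}_{vv}=0$. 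For a conformal parametrization the position vector satisfies $\Delta_{g}\mathbf{x}=\tfrac{1}{E}(\mathbf{x}_{uu}+\mathbf{x}_{vv})=2\vec{H}$, with $\vec{H}$ the mean curvature vector of the spacelike surface; thus $\vec{H}=0$, the mean curvature vanishes identically, and $\mathbf{x}(U)$ is a maximal surface.

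The computations are routine, so the only real care needed is bookkeeping: keeping the minus sign on the third coordinate consistent when passing between the bilinear form $\phi_{1}^{2}+\phi_{2}^{2}-\phi_{3}^{2}$ and the Hermitian expression $|\phi_{1}|^{2}+|\phi_{2}|^{2}-|\phi_{3}|^{2}$, and justifying the identity $\Delta_{g}\mathbf{x}=2\vec{H}$ in the Lorentzian category. Here one notes that the Gauss-formula derivation of ``harmonic $+$ conformal $\Rightarrow$ zero mean curvature'' is purely local and formally identical to the Euclidean one, the signature entering only through the positivity of $E$, which has already been secured.
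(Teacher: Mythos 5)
Your verification is correct, but there is nothing in the paper to compare it against: the paper states this Weierstrass representation theorem without any proof, simply referring the reader to the literature (Kobayashi, Anciaux, McNertney). Your argument is essentially the standard Osserman/Kobayashi derivation, and the details check out: the holomorphy and simple connectedness of $U$ make $\Phi=\int^{\zeta}\phi\,d\zeta$ well defined; the two identities $\phi_{1}^{2}+\phi_{2}^{2}-\phi_{3}^{2}=0$ and $|\phi_{1}|^{2}+|\phi_{2}|^{2}-|\phi_{3}|^{2}=2|\mathfrak{F}|^{2}\left(1-|\mathcal{G}|^{2}\right)^{2}$ are exactly right for the metric $dx^{2}+dy^{2}-dz^{2}$ used in this section; the translation $\langle\phi,\phi\rangle=E-G-2iF$ and $E=G=|\mathfrak{F}|^{2}\left(1-|\mathcal{G}|^{2}\right)^{2}>0$ correctly yields a conformal, spacelike immersion under the hypotheses $\mathfrak{F}\neq0$, $|\mathcal{G}|\neq1$ (note these hypotheses are used precisely here, and nowhere else); and harmonicity of $\operatorname{Re}\Phi$ plus Beltrami's identity $\Delta_{g}\mathbf{x}=2\vec{H}$ gives $H=0$. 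The one step that deserves the justification you flag is indeed the Lorentzian version of $\Delta_{g}\mathbf{x}=2\vec{H}$: it holds because $\mathbb{L}^{3}$ is flat (so the tension field is the componentwise Laplace--Beltrami operator in linear coordinates) and the induced metric is Riemannian, so the Euclidean computation carries over verbatim; since the unit normal is timelike and nondegenerate, $\vec{H}=0$ does give vanishing mean curvature. So your proposal supplies a complete proof of a statement the paper leaves to citations, consistent with how the theorem is then used (e.g.\ it reproduces the coefficients $E=G=r^{2m-2}\left(1-r^{2}\right)^{2}r^{-2}\cdot r^{2}$-type formulas obtained for $\mathfrak{B}_{m}$ in the next theorem).
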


See also details for the maximal surfaces in literature \cite{Anciaux,
Kobayashi, McNertney}.

\begin{lemma}
The Weierstrass patch determined by the functions%
\begin{equation*}
\left( \mathfrak{F}\left( \zeta \right) ,\mathcal{G}\left( \zeta \right)
\right) =\left( \zeta ^{m-2},\zeta \right)
\end{equation*}%
is a representation of the Bour's surface of value $\zeta \in 
\mathbb{C}
,$ $m\in \mathbb{R}$ in $\mathbb{L}^{3}.$
\end{lemma}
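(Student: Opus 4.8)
The plan is to substitute the prescribed Weierstrass data $(\mathfrak{F},\mathcal{G})=(\zeta^{m-2},\zeta)$ into the representation formula of the preceding theorem, check that its hypotheses hold, integrate explicitly, and recognise the result as the classical coordinate expression for Bour's surface $\mathfrak{B}_{m}$. First I would fix a simply connected domain $U\subset\mathbb{C}$, avoiding the unit circle $|\zeta|=1$ and (depending on $m$) the origin and a branch cut, on which $\mathfrak{F}(\zeta)=\zeta^{m-2}$ is a single-valued holomorphic nonvanishing function; there $|\mathcal{G}(\zeta)|=|\zeta|\neq 1$, so by the theorem $\mathbf{x}$ is automatically a conformal immersion onto a maximal surface and no curvature computation is needed.

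Next I would carry out the integration. Since $\mathfrak{F}(1+\mathcal{G}^{2})=\zeta^{m-2}+\zeta^{m}$, $\;i\,\mathfrak{F}(1-\mathcal{G}^{2})=i(\zeta^{m-2}-\zeta^{m})$ and $2\mathfrak{F}\mathcal{G}=2\zeta^{m-1}$, the power rule gives, up to an additive constant,
\begin{equation*}
\int^{\zeta}\begin{pmatrix}\zeta^{m-2}+\zeta^{m}\\ i(\zeta^{m-2}-\zeta^{m})\\ 2\zeta^{m-1}\end{pmatrix}d\zeta=\begin{pmatrix}\frac{\zeta^{m-1}}{m-1}+\frac{\zeta^{m+1}}{m+1}\\ i\bigl(\frac{\zeta^{m-1}}{m-1}-\frac{\zeta^{m+1}}{m+1}\bigr)\\ \frac{2\zeta^{m}}{m}\end{pmatrix}.
\end{equation*}
Writing $\zeta=re^{i\theta}$, so that $\zeta^{k}=r^{k}\cos k\theta+i\,r^{k}\sin k\theta$, and taking real parts yields the explicit patch
\begin{equation*}
\mathbf{x}(r,\theta)=\begin{pmatrix}\frac{r^{m-1}}{m-1}\cos((m-1)\theta)+\frac{r^{m+1}}{m+1}\cos((m+1)\theta)\\ -\frac{r^{m-1}}{m-1}\sin((m-1)\theta)+\frac{r^{m+1}}{m+1}\sin((m+1)\theta)\\ \frac{2r^{m}}{m}\cos(m\theta)\end{pmatrix},
\end{equation*}
which is the classical coordinate expression for $\mathfrak{B}_{m}$ up to the sign changes dictated by passing from $dx^{2}+dy^{2}+dz^{2}$ to the Lorentzian metric $dx^{2}+dy^{2}-dz^{2}$ (i.e.\ by the interchange of $1+\mathcal{G}^{2}$ and $1-\mathcal{G}^{2}$ in the Weierstrass integrand). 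Comparison with the Euclidean statement recalled in the Introduction, namely the choice $F(s)=Cs^{m-2}$, then identifies this patch as Bour's surface in $\mathbb{L}^{3}$.

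I expect the only delicate point to be the exceptional values $m\in\{-1,0,1\}$, where one of the denominators $m-1$, $m$, $m+1$ vanishes because the corresponding monomial becomes $\zeta^{-1}$, whose antiderivative is $\log\zeta$ rather than a power. In those cases the affected coordinate acquires a term in $\log r$ (and, via $\arg\zeta$, possibly a term linear in $\theta$), and one must keep $U$ simply connected so that this logarithm is single valued; otherwise the computation is identical, with the power rule replaced by the logarithm in the single slot where the exponent hits $-1$. These limiting profiles of $\mathfrak{B}_{m}$ recover the Lorentzian analogues of the catenoid/helicoid family, exactly as in the Euclidean theory.
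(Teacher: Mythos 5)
Your proposal is correct and follows essentially the same route as the paper: substituting $(\mathfrak{F},\mathcal{G})=(\zeta^{m-2},\zeta)$ into the Weierstrass formula of the preceding theorem and integrating term by term, your real-part computation reproduces exactly the parametrization $\mathfrak{B}_{m}(r,\theta)$ given in equation (\ref{7}). Your additional remarks on the hypotheses ($\mathfrak{F}\neq 0$, $|\mathcal{G}|\neq 1$) and on the exceptional values $m\in\{-1,0,1\}$ (which the paper simply excludes in its theorem) are consistent with, and slightly more careful than, the paper's treatment.
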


\begin{theorem}
Bour's surface of value $m$%
\begin{equation}
\mathfrak{B}_{m}\left( r,\theta \right) =\left( 
\begin{array}{c}
\frac{r^{m-1}}{m-1}\cos \left[ \left( m-1\right) \theta \right] +\frac{%
r^{m+1}}{m+1}\cos \left[ \left( m+1\right) \theta \right] \\ 
-\frac{r^{m-1}}{m-1}\sin \left[ \left( m-1\right) \theta \right] +\frac{%
r^{m+1}}{m+1}\sin \left[ \left( m+1\right) \theta \right] \\ 
2\frac{r^{m}}{m}\cos \left( m\theta \right)%
\end{array}%
\right)  \label{7}
\end{equation}%
is a maximal surface in $\mathbb{L}^{3}$, where $m\in \mathbb{R}-\left\{
-1,0,1\right\} $.
\end{theorem}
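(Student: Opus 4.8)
The plan is to invoke the Weierstrass representation for maximal surfaces in $\mathbb{L}^{3}$ (the Theorem above) with the data furnished by the Lemma, namely $\left( \mathfrak{F}\left( \zeta \right) ,\mathcal{G}\left( \zeta \right) \right) =\left( \zeta ^{m-2},\zeta \right) $, and then carry out the integration explicitly in polar coordinates until the right-hand side of (\ref{7}) appears.

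First I would fix a simply connected open set $U\subseteq \mathbb{C}$ on which the power $\zeta ^{m-2}$ admits a single-valued holomorphic branch and which meets neither the origin nor the unit circle; concretely one may take $U$ to be an angular sector with a radial slit removed and write $\zeta =re^{i\theta }$ with $r>0$, $r\neq 1$. On such a $U$ the function $\mathfrak{F}\left( \zeta \right) =\zeta ^{m-2}$ is holomorphic and nowhere zero, while $\left\vert \mathcal{G}\left( \zeta \right) \right\vert =\left\vert \zeta \right\vert =r\neq 1$, so the hypotheses of the Weierstrass theorem hold and the associated map $\mathbf{x}\left( \zeta \right) $ is a conformal immersion whose image is a maximal surface. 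It then only remains to check that this map is precisely $\mathfrak{B}_{m}$.

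For that I would expand the integrand,
\begin{equation*}
\mathfrak{F}\left( 1+\mathcal{G}^{2}\right) =\zeta ^{m-2}+\zeta ^{m},\qquad i\,\mathfrak{F}\left( 1-\mathcal{G}^{2}\right) =i\zeta ^{m-2}-i\zeta ^{m},\qquad 2\mathfrak{F}\mathcal{G}=2\zeta ^{m-1},
\end{equation*}
and, using that $m\in \mathbb{R}-\left\{ -1,0,1\right\} $ makes the exponents $m-1,m,m+1$ all nonzero, integrate term by term to obtain the antiderivatives $\frac{\zeta ^{m-1}}{m-1}+\frac{\zeta ^{m+1}}{m+1}$, $\frac{i\zeta ^{m-1}}{m-1}-\frac{i\zeta ^{m+1}}{m+1}$, and $\frac{2\zeta ^{m}}{m}$. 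Substituting $\zeta ^{k}=r^{k}\left( \cos k\theta +i\sin k\theta \right) $ and taking real parts via $\func{Re}(\zeta ^{k})=r^{k}\cos k\theta $ and $\func{Re}(i\zeta ^{k})=-r^{k}\sin k\theta $, the three components collapse exactly to the entries of $\mathfrak{B}_{m}\left( r,\theta \right) $ in (\ref{7}), which finishes the proof.

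I do not anticipate a genuine obstacle. The only points needing care are the choice of $U$ so that $\zeta ^{m-2}$ is single-valued and the conditions $\mathfrak{F}\neq 0$, $\left\vert \mathcal{G}\right\vert \neq 1$ are met — which is exactly why the origin, the unit circle, and a branch cut must be excluded — and the additive constant of integration, which is harmless since it merely translates the surface. The restriction $m\notin \left\{ -1,0,1\right\} $ is precisely what keeps the denominators $m-1$, $m$, $m+1$ from vanishing.
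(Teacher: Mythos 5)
Your proof is correct, but it takes a genuinely different route from the paper. The paper proves the theorem by brute force on the parametrization (\ref{7}) itself: it computes $E=r^{2m-4}(1-r^{2})^{2}$, $F=0$, $G=r^{2m-2}(1-r^{2})^{2}$, deduces spacelikeness from $\det I>0$, writes down the Gauss map, computes $L,M,N$ and $\det II=-4r^{2m-2}$, and then verifies $H=0$ (obtaining the Gaussian curvature $K$ as a by-product). You instead derive (\ref{7}) from the Weierstrass representation theorem stated earlier in the paper, taking the Lemma's data $(\mathfrak{F},\mathcal{G})=(\zeta^{m-2},\zeta)$, checking its hypotheses on a simply connected domain avoiding $\zeta=0$, the unit circle, and a branch cut, and integrating term by term; your antiderivatives and the real-part identities $\func{Re}(\zeta^{k})=r^{k}\cos k\theta$, $\func{Re}(i\zeta^{k})=-r^{k}\sin k\theta$ do reproduce (\ref{7}) exactly, so maximality follows at once from the representation theorem. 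Your argument is shorter, explains where the parametrization comes from (in effect it also proves the paper's Lemma, which the paper leaves unproved), and localizes all the degeneracy issues in the hypotheses $\mathfrak{F}\neq0$, $\vert\mathcal{G}\vert\neq1$ — i.e.\ the exclusion of $r=0,1$, which the paper's computation also implicitly requires since its $\det I$ vanishes there. What the paper's direct computation buys in exchange is the explicit conformal first fundamental form, the Gauss map, the second fundamental form, and the curvature formula $K=\bigl(2r^{2-m}/(1-r^{2})^{2}\bigr)^{2}$, data that the paper reuses in the later discussion of $\mathfrak{B}_{3}$; your approach would need a further (standard) computation from the Weierstrass data to recover these.
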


\begin{proof}
The\textbf{\ }coefficients\textbf{\ }of the\textbf{\ }first fundamental form
of the surface $\mathfrak{B}_{m}$ are%
\begin{eqnarray*}
E &=&r^{2m-4}\left( 1-r^{2}\right) ^{2}, \\
F &=&0, \\
G &=&r^{2m-2}\left( 1-r^{2}\right) ^{2}.
\end{eqnarray*}%
We have%
\begin{equation*}
\det I=\left[ r^{2m-3}\left( 1-r^{2}\right) ^{2}\right] ^{2}.
\end{equation*}%
So, $\mathfrak{B}_{m}$ is a spacelike surface. The Gauss map of the surface
is%
\begin{equation*}
e=\frac{1}{r^{2}-1}\left( 
\begin{array}{c}
2r\cos (\theta ) \\ 
2r\sin (\theta ) \\ 
r^{2}+1%
\end{array}%
\right) .
\end{equation*}%
The\textbf{\ }coefficients of the\ second fundamental form of the surface are%
\begin{eqnarray*}
L &=&2r^{m-2}\cos \left( m\theta \right) , \\
M &=&-2r^{m-1}\sin \left( m\theta \right) , \\
N &=&-2r^{m}\cos \left( m\theta \right) .
\end{eqnarray*}%
Then, we have%
\begin{equation*}
\det II=-4r^{2m-2}.
\end{equation*}%
In spacelike case, the Gaussian curvature is defined by%
\begin{equation*}
K=\epsilon \text{ }\frac{\det II}{\left\vert \det I\right\vert },
\end{equation*}%
where $\epsilon :=\left\langle e,e\right\rangle =-1$ in $\mathbb{L}^{3}.$
Hence, the Gaussian curvature and the mean curvature of the Bour's surface
of value $m$, respectively, are%
\begin{equation*}
K=\left( \frac{2r^{2-m}}{\left( 1-r^{2}\right) ^{2}}\right) ^{2},\text{ \ }%
H=0.
\end{equation*}%
Therefore, the $\mathfrak{B}_{m}$ is a maximal surface in $\mathbb{L}^{3}.$
\end{proof}

\subsection{Bour's spacelike surface $\mathfrak{B}_{3}$}

If take $m=3$ in $\mathfrak{B}_{m}\left( r,\theta \right) $, we have \textit{%
Bour's maximal surface}\textbf{\ }$\mathfrak{B}_{3}$\textbf{\ }(see Fig. 1
and Fig. 2)%
\begin{equation}
\mathfrak{B}_{3}\left( r,\theta \right) =\left( 
\begin{array}{c}
\frac{r^{2}}{2}\cos \left( 2\theta \right) +\frac{r^{4}}{4}\cos \left(
4\theta \right) \\ 
-\frac{r^{2}}{2}\sin \left( 2\theta \right) +\frac{r^{4}}{4}\sin \left(
4\theta \right) \\ 
\frac{2}{3}r^{3}\cos \left( 3\theta \right)%
\end{array}%
\right)  \label{8}
\end{equation}%
in Minkowski 3-space, where $r\in \lbrack -1,1]$, $\theta \in \lbrack 0,\pi
].$

\begin{center}
\begin{equation*}
\FRAME{itbpF}{1.8671in}{1.7435in}{0in}{}{}{Figure}{\special{language
"Scientific Word";type "GRAPHIC";maintain-aspect-ratio TRUE;display
"USEDEF";valid_file "T";width 1.8671in;height 1.7435in;depth
0in;original-width 3.0934in;original-height 2.885in;cropleft "0";croptop
"1";cropright "1";cropbottom "0";tempfilename
'MG5RC30U.wmf';tempfile-properties "XPR";}}\FRAME{itbpF}{1.8983in}{1.7513in}{%
0in}{}{}{Figure}{\special{language "Scientific Word";type
"GRAPHIC";maintain-aspect-ratio TRUE;display "USEDEF";valid_file "T";width
1.8983in;height 1.7513in;depth 0in;original-width 3.0519in;original-height
2.8124in;cropleft "0";croptop "1";cropright "1";cropbottom "0";tempfilename
'MG5RC30V.wmf';tempfile-properties "XPR";}}
\end{equation*}%
$\ \ \ \ \left( a\right) $ \ \ \ \ \ \ \ \ \ \ \ \ \ \ \ \ \ \ \ \ \ \ \ \ \
\ \ \ \ \ \ \ $\left( b\right) $ \ \ \ \ 

Figure 1 \ Bour's maximal surface $\mathfrak{B}_{3}\left( r,\theta \right) $

\begin{equation*}
\FRAME{itbpF}{3.4359in}{3.4489in}{0in}{}{}{Figure}{\special{language
"Scientific Word";type "GRAPHIC";maintain-aspect-ratio TRUE;display
"USEDEF";valid_file "T";width 3.4359in;height 3.4489in;depth
0in;original-width 5.6879in;original-height 5.7086in;cropleft "0";croptop
"1";cropright "1";cropbottom "0";tempfilename
'MHGCK700.wmf';tempfile-properties "XPR";}}
\end{equation*}

Figure 2 \ Maximal $\mathfrak{B}_{3}\left( r,\theta \right) $ with its
shadows
\end{center}

The\textbf{\ }coefficients\textbf{\ }of the\textbf{\ }first fundamental form
of the Bour's maximal surface of value $3$ are%
\begin{equation*}
E=r^{2}\left( 1-r^{2}\right) ^{2},\text{ }F=0,\text{ }G=r^{4}\left(
1-r^{2}\right) ^{2}.
\end{equation*}%
So,%
\begin{equation*}
\det I=r^{6}\left( 1-r^{2}\right) ^{4}.
\end{equation*}%
The Gauss map of the surface is%
\begin{equation*}
e=\frac{1}{r^{2}-1}\left( 2r\cos (\theta ),2r\sin (\theta ),1+r^{2}\right) .
\end{equation*}%
The\textbf{\ }coefficients of the\ second fundamental form of the surface are%
\begin{equation*}
L=2r\cos \left( 3\theta \right) ,\text{ }M=-2r^{2}\sin \left( 3\theta
\right) ,\text{ }N=-2r^{3}\cos \left( 3\theta \right) .
\end{equation*}%
Then,%
\begin{equation*}
\det II=-4r^{4}.
\end{equation*}%
The mean and the Gaussian curvatures of the Bour's maximal surface of value $%
3$ are, respectively,%
\begin{equation*}
H=0,\text{ \ }K=\frac{4}{r^{2}\left( 1-r^{2}\right) ^{4}}.
\end{equation*}%
The Weierstrass patch determined by the functions%
\begin{equation*}
\left( \mathfrak{F},\mathcal{G}\right) =\left( \zeta ,\zeta \right)
\end{equation*}%
is a representation of the Bour's maximal surface of value $3$ in $\mathbb{L}%
^{3}.$ The parametric form of the maximal surface $\mathfrak{B}_{3}$ (see
Fig. 3, and Fig. 4) is%
\begin{equation}
\mathfrak{B}_{3}\left( u,v\right) =\left( 
\begin{array}{c}
\frac{u^{4}}{4}+\frac{v^{4}}{4}-\frac{3}{2}u^{2}v^{2}+\frac{u^{2}}{2}-\frac{%
v^{2}}{2} \\ 
u^{3}v-uv^{3}-uv \\ 
\frac{2}{3}u^{3}-2uv^{2}%
\end{array}%
\right) ,  \label{9}
\end{equation}%
where $u,v\in \mathbb{R}$.

\begin{equation*}
\FRAME{itbpF}{2.175in}{1.8671in}{0in}{}{}{Figure}{\special{language
"Scientific Word";type "GRAPHIC";maintain-aspect-ratio TRUE;display
"USEDEF";valid_file "T";width 2.175in;height 1.8671in;depth
0in;original-width 4.0836in;original-height 3.4999in;cropleft "0";croptop
"1";cropright "1";cropbottom "0";tempfilename
'MG5RC30W.wmf';tempfile-properties "XPR";}}\FRAME{itbpF}{2.1326in}{1.8741in}{%
0in}{}{}{Figure}{\special{language "Scientific Word";type
"GRAPHIC";maintain-aspect-ratio TRUE;display "USEDEF";valid_file "T";width
2.1326in;height 1.8741in;depth 0in;original-width 3.9271in;original-height
3.448in;cropleft "0";croptop "1";cropright "1";cropbottom "0";tempfilename
'MG5RC30X.wmf';tempfile-properties "XPR";}}
\end{equation*}

\begin{center}
$\ \left( a\right) $ \ \ \ \ \ \ \ \ \ \ \ \ \ \ \ \ \ \ \ \ \ \ \ \ \ \ \ \
\ \ \ \ \ \ \ \ \ $\left( b\right) $

Figure 3 \ Maximal surface $\mathfrak{B}_{3}\left( u,v\right) ,$ $u,v\in
\lbrack -1,1]$

\begin{equation*}
\FRAME{itbpF}{3.2975in}{4.0023in}{0in}{}{}{Figure}{\special{language
"Scientific Word";type "GRAPHIC";maintain-aspect-ratio TRUE;display
"USEDEF";valid_file "T";width 3.2975in;height 4.0023in;depth
0in;original-width 4.7816in;original-height 5.8124in;cropleft "0";croptop
"1";cropright "1";cropbottom "0";tempfilename
'MHGCPB01.wmf';tempfile-properties "XPR";}}
\end{equation*}

Figure 4 \ Maximal surface $\mathfrak{B}_{3}\left( u,v\right) $ with its
shadows
\end{center}

The\textbf{\ }coefficients\textbf{\ }of the\textbf{\ }first fundamental form
of the Bour's surface of value $3$ are%
\begin{equation*}
E=\left( u^{2}+v^{2}\right) \left( u^{2}+v^{2}-1\right) ^{2}=G,\text{ \ }F=0,
\end{equation*}%
So,%
\begin{equation*}
\det I=\left( u^{2}+v^{2}\right) ^{2}\left( u^{2}+v^{2}-1\right) ^{4}.
\end{equation*}%
The Gauss map of the surface $\mathfrak{B}_{3}$ is%
\begin{equation*}
e=\frac{1}{u^{2}+v^{2}-1}\left( 2u,2v,u^{2}+v^{2}+1\right) .
\end{equation*}%
The\textbf{\ }coefficients of the\ second fundamental form of the surface are%
\begin{equation*}
L=2u,\text{ }M=-2v,\text{ }N=-2u.
\end{equation*}%
Then,%
\begin{equation*}
\det II=-4\left( u^{2}+v^{2}\right) .
\end{equation*}%
The mean and the Gaussian curvatures of the Bour's minimal surface of value $%
3$ are%
\begin{equation*}
H=0,\text{ \ }K=\frac{4}{\left( u^{2}+v^{2}\right) \left(
1+u^{2}+v^{2}\right) ^{4}}.
\end{equation*}

\subsection{Applications of the definite case in $\mathbb{L}^{3}$}

\begin{example}
If take $m=2$, we have\textbf{\ }Enneper's maximal surface\textbf{\ }without
self-intersections\textbf{\ }(see Fig. 5)%
\begin{equation*}
\mathfrak{B}_{2}\left( r,\theta \right) =\left( 
\begin{array}{c}
r\cos \left( \theta \right) +\frac{r^{3}}{3}\cos \left( 3\theta \right) \\ 
-r\sin \left( \theta \right) +\frac{r^{3}}{3}\sin \left( 3\theta \right) \\ 
r^{2}\cos \left( 2\theta \right)%
\end{array}%
\right) ,
\end{equation*}%
and $\left( \mathfrak{F},\mathcal{G}\right) =\left( 1,\zeta \right) $ in
Minkowski 3-space, where $r\in \lbrack -1,1]$, $\theta \in \lbrack 0,\pi ].$
\end{example}

\begin{center}
\begin{equation*}
\FRAME{itbpF}{1.7435in}{1.8066in}{0in}{}{}{Figure}{\special{language
"Scientific Word";type "GRAPHIC";display "USEDEF";valid_file "T";width
1.7435in;height 1.8066in;depth 0in;original-width 3.1773in;original-height
3.6461in;cropleft "0";croptop "1";cropright "1";cropbottom "0";tempfilename
'MG5RC30Y.wmf';tempfile-properties "XPR";}}\FRAME{itbpF}{1.9199in}{1.8005in}{%
0in}{}{}{Figure}{\special{language "Scientific Word";type
"GRAPHIC";maintain-aspect-ratio TRUE;display "USEDEF";valid_file "T";width
1.9199in;height 1.8005in;depth 0in;original-width 3.3018in;original-height
3.0934in;cropleft "0";croptop "1";cropright "1";cropbottom "0";tempfilename
'MG5RC30Z.wmf';tempfile-properties "XPR";}}
\end{equation*}%
$\left( a\right) $ \ \ \ \ \ \ \ \ \ \ \ \ \ \ \ \ \ \ \ \ \ \ \ \ \ \ \ \ \
\ \ $\left( b\right) $

Figure 5 \ Maximal surface $\mathfrak{B}_{2}$ without self-intersections
\end{center}

\begin{example}
If take $m=2$, we have Enneper's maximal surface\textbf{\ }with
self-intersections\textbf{\ }(see Fig. 6) in Minkowski 3-space, where $r\in
\lbrack -3,3]$, $\theta \in \lbrack 0,\pi ].$
\end{example}

\begin{center}
\begin{equation*}
\FRAME{itbpF}{1.8887in}{1.9069in}{0in}{}{}{Figure}{\special{language
"Scientific Word";type "GRAPHIC";display "USEDEF";valid_file "T";width
1.8887in;height 1.9069in;depth 0in;original-width 3.2603in;original-height
3.4065in;cropleft "0";croptop "1";cropright "1";cropbottom "0";tempfilename
'MG5RC310.wmf';tempfile-properties "XPR";}}\FRAME{itbpF}{2.1093in}{1.8948in}{%
0in}{}{}{Figure}{\special{language "Scientific Word";type
"GRAPHIC";maintain-aspect-ratio TRUE;display "USEDEF";valid_file "T";width
2.1093in;height 1.8948in;depth 0in;original-width 3.5518in;original-height
3.1877in;cropleft "0";croptop "1";cropright "1";cropbottom "0";tempfilename
'MG5RC311.wmf';tempfile-properties "XPR";}}
\end{equation*}%
$\left( a\right) $ \ \ \ \ \ \ \ \ \ \ \ \ \ \ \ \ \ \ \ \ \ \ \ \ \ \ \ \ \
\ \ \ \ \ $\left( b\right) $

Figure 6 $\ $Maximal surface $\mathfrak{B}_{2}$ with self-intersections
\end{center}

\begin{example}
If take $m=\frac{1}{2}$, we have\textbf{\ }(see Fig. 7)%
\begin{equation*}
\mathfrak{B}_{1/2}\left( r,\theta \right) =\left( 
\begin{array}{c}
-2r^{-1/2}\cos \left( \frac{\theta }{2}\right) +\frac{2}{3}r^{3/2}\cos
\left( \frac{3\theta }{2}\right) \\ 
-2r^{-1/2}\sin \left( \frac{\theta }{2}\right) +\frac{2}{3}r^{3/2}\sin
\left( \frac{3\theta }{2}\right) \\ 
4r^{1/2}\cos \left( \frac{\theta }{2}\right)%
\end{array}%
\right) ,
\end{equation*}%
and $\left( \mathfrak{F},\mathcal{G}\right) =\left( \zeta ^{-3/2},\zeta
\right) $ in Minkowski 3-space, where $r\in \lbrack -1,1]$, $\theta \in
\lbrack -2\pi ,2\pi ].$
\end{example}

\begin{center}
\begin{equation*}
\FRAME{itbpF}{2.1283in}{1.881in}{0in}{}{}{Figure}{\special{language
"Scientific Word";type "GRAPHIC";maintain-aspect-ratio TRUE;display
"USEDEF";valid_file "T";width 2.1283in;height 1.881in;depth
0in;original-width 3.2811in;original-height 2.8963in;cropleft "0";croptop
"1";cropright "1";cropbottom "0";tempfilename
'MG5RC312.wmf';tempfile-properties "XPR";}}\FRAME{itbpF}{1.9925in}{1.8741in}{%
0in}{}{}{Figure}{\special{language "Scientific Word";type
"GRAPHIC";maintain-aspect-ratio TRUE;display "USEDEF";valid_file "T";width
1.9925in;height 1.8741in;depth 0in;original-width 3.1462in;original-height
2.9585in;cropleft "0";croptop "1";cropright "1";cropbottom "0";tempfilename
'MG5RC313.wmf';tempfile-properties "XPR";}}
\end{equation*}%
$\left( a\right) $ \ \ \ \ \ \ \ \ \ \ \ \ \ \ \ \ \ \ \ \ \ \ \ \ \ \ \ \ \
\ \ \ \ \ \ \ \ \ $\left( b\right) $

Figure 7 \ Maximal surface $\mathfrak{B}_{1/2}$
\end{center}

\begin{example}
If $m=\frac{3}{2}$, we have (see Fig. 8)%
\begin{equation*}
\mathfrak{B}_{3/2}\left( r,\theta \right) =\left( 
\begin{array}{c}
2r^{-1/2}\cos \left( \frac{\theta }{2}\right) +\frac{2}{5}r^{5/2}\cos \left( 
\frac{5\theta }{2}\right) \\ 
-2r^{-1/2}\sin \left( \frac{\theta }{2}\right) +\frac{2}{5}r^{5/2}\sin
\left( \frac{5\theta }{2}\right) \\ 
\frac{4}{3}r^{3/2}\cos \left( \frac{3\theta }{2}\right)%
\end{array}%
\right) ,
\end{equation*}%
with self-intersections, and $\left( \mathfrak{F},\mathcal{G}\right) =\left(
\zeta ^{-1/2},\zeta \right) $ in Minkowski 3-space, where $r\in \lbrack
-3,3] $, $\theta \in \lbrack -2\pi ,2\pi ].$
\end{example}

\begin{center}
\begin{equation*}
\FRAME{itbpF}{1.8299in}{1.7841in}{0in}{}{}{Figure}{\special{language
"Scientific Word";type "GRAPHIC";maintain-aspect-ratio TRUE;display
"USEDEF";valid_file "T";width 1.8299in;height 1.7841in;depth
0in;original-width 3.7395in;original-height 3.6461in;cropleft "0";croptop
"1";cropright "1";cropbottom "0";tempfilename
'MG5RC314.wmf';tempfile-properties "XPR";}}\FRAME{itbpF}{2.047in}{1.7685in}{%
0in}{}{}{Figure}{\special{language "Scientific Word";type
"GRAPHIC";maintain-aspect-ratio TRUE;display "USEDEF";valid_file "T";width
2.047in;height 1.7685in;depth 0in;original-width 3.5725in;original-height
3.0831in;cropleft "0";croptop "1";cropright "1";cropbottom "0";tempfilename
'MG5RC315.wmf';tempfile-properties "XPR";}}
\end{equation*}%
$\left( a\right) $ \ \ \ \ \ \ \ \ \ \ \ \ \ \ \ \ \ \ \ \ \ \ \ \ \ \ \ \ \
\ \ \ \ \ \ $\left( b\right) $ \ \ 

Figure 8 \ Maximal surface $\mathfrak{B}_{3/2}$
\end{center}

\begin{example}
If $m=\frac{3}{2}$, we have $\mathfrak{B}_{3/2}\left( r,\theta \right) $
without self-intersections (see Fig. 9), and $\left( \mathfrak{F},\mathcal{G}%
\right) =\left( z^{-1/2},z\right) $ in Minkowski 3-space, where $r\in
\lbrack -1,1]$, $\theta \in \lbrack -2\pi ,2\pi ].$
\end{example}

\begin{center}
\begin{equation*}
\FRAME{itbpF}{1.8922in}{1.7772in}{0in}{}{}{Figure}{\special{language
"Scientific Word";type "GRAPHIC";display "USEDEF";valid_file "T";width
1.8922in;height 1.7772in;depth 0in;original-width 3.8337in;original-height
3.4272in;cropleft "0";croptop "1";cropright "1";cropbottom "0";tempfilename
'MG5RC316.wmf';tempfile-properties "XPR";}}\text{\FRAME{itbpF}{1.9112in}{%
1.7737in}{0in}{}{}{Figure}{\special{language "Scientific Word";type
"GRAPHIC";display "USEDEF";valid_file "T";width 1.9112in;height
1.7737in;depth 0in;original-width 3.531in;original-height 3.4065in;cropleft
"0";croptop "1";cropright "1";cropbottom "0";tempfilename
'MG5RC317.wmf';tempfile-properties "XPR";}}}
\end{equation*}%
\ $\left( a\right) $ \ \ \ \ \ \ \ \ \ \ \ \ \ \ \ \ \ \ \ \ \ \ \ \ \ \ \ \
\ \ \ \ \ \ $\left( b\right) $ \ \ 

Figure 9 \ Maximal surface $\mathfrak{B}_{3/2}$
\end{center}

\begin{example}
If $m=\frac{2}{3}$, we have (see Fig. 10)%
\begin{equation*}
\mathfrak{B}_{2/3}\left( r,\theta \right) =\left( 
\begin{array}{c}
-3r^{-1/3}\cos \left( \frac{\theta }{3}\right) +\frac{3}{5}r^{5/3}\cos
\left( \frac{5\theta }{3}\right) \\ 
-3r^{-1/3}\sin \left( \frac{\theta }{3}\right) +\frac{3}{5}r^{5/3}\sin
\left( \frac{5\theta }{3}\right) \\ 
3r^{2/3}\cos \left( \frac{2\theta }{3}\right)%
\end{array}%
\right) ,
\end{equation*}%
and $\left( \mathfrak{F},\mathcal{G}\right) =\left( \zeta ^{-4/3},\zeta
\right) $ in Minkowski 3-space, where $r\in \lbrack -1,1]$, $\theta \in
\lbrack -3\pi ,3\pi ].$
\end{example}

\begin{center}
\begin{equation*}
\FRAME{itbpF}{1.9735in}{1.8239in}{0in}{}{}{Figure}{\special{language
"Scientific Word";type "GRAPHIC";maintain-aspect-ratio TRUE;display
"USEDEF";valid_file "T";width 1.9735in;height 1.8239in;depth
0in;original-width 3.4065in;original-height 3.1462in;cropleft "0";croptop
"1";cropright "1";cropbottom "0";tempfilename
'MG5RC318.wmf';tempfile-properties "XPR";}}\FRAME{itbpF}{2.0116in}{1.8178in}{%
0in}{}{}{Figure}{\special{language "Scientific Word";type
"GRAPHIC";maintain-aspect-ratio TRUE;display "USEDEF";valid_file "T";width
2.0116in;height 1.8178in;depth 0in;original-width 4.0871in;original-height
3.691in;cropleft "0";croptop "1";cropright "1";cropbottom "0";tempfilename
'MG5RC319.bmp';tempfile-properties "XPR";}}
\end{equation*}%
$\left( a\right) $ \ \ \ \ \ \ \ \ \ \ \ \ \ \ \ \ \ \ \ \ \ \ \ \ \ \ \ \ \
\ \ \ \ \ \ $\left( b\right) $

Figure 10 \ Maximal surface $\mathfrak{B}_{2/3}$
\end{center}

\begin{example}
If $m=\frac{4}{3}$, then we have (see Fig. 11)%
\begin{equation*}
\mathfrak{B}_{4/3}\left( r,\theta \right) =\left( 
\begin{array}{c}
3r^{1/3}\cos \left( \frac{\theta }{3}\right) +\frac{3}{7}r^{7/3}\cos \left( 
\frac{7\theta }{3}\right) \\ 
-3r^{1/3}\sin \left( \frac{\theta }{3}\right) +\frac{3}{7}r^{7/3}\sin \left( 
\frac{7\theta }{3}\right) \\ 
\frac{3}{2}r^{4/3}\cos \left( \frac{4\theta }{3}\right)%
\end{array}%
\right) ,
\end{equation*}%
and $\left( \mathfrak{F},\mathcal{G}\right) =\left( \zeta ^{-2/3},\zeta
\right) $ in Minkowski 3-space, where $r\in \lbrack -2,2]$, $\theta \in
\lbrack -3\pi ,3\pi ].$
\end{example}

\begin{center}
\begin{equation*}
\FRAME{itbpF}{1.7876in}{1.8524in}{0in}{}{}{Figure}{\special{language
"Scientific Word";type "GRAPHIC";maintain-aspect-ratio TRUE;display
"USEDEF";valid_file "T";width 1.7876in;height 1.8524in;depth
0in;original-width 3.6876in;original-height 3.8233in;cropleft "0";croptop
"1";cropright "1";cropbottom "0";tempfilename
'MG5RC31A.wmf';tempfile-properties "XPR";}}\FRAME{itbpF}{2.047in}{1.8421in}{%
0in}{}{}{Figure}{\special{language "Scientific Word";type
"GRAPHIC";maintain-aspect-ratio TRUE;display "USEDEF";valid_file "T";width
2.047in;height 1.8421in;depth 0in;original-width 3.5103in;original-height
3.1566in;cropleft "0";croptop "1";cropright "1";cropbottom "0";tempfilename
'MG5RC31B.wmf';tempfile-properties "XPR";}}
\end{equation*}%
$\left( a\right) $ \ \ \ \ \ \ \ \ \ \ \ \ \ \ \ \ \ \ \ \ \ \ \ \ \ \ \ \ \
\ \ \ $\left( b\right) $ \ 

Figure 11 \ Maximal surface $\mathfrak{B}_{4/3}$
\end{center}

\begin{example}
If $m=\frac{5}{2}$, then we have (see Fig. 12)%
\begin{equation*}
\mathfrak{B}_{5/2}\left( r,\theta \right) =\left( 
\begin{array}{c}
\frac{2}{3}r^{3/2}\cos \left( \frac{3\theta }{2}\right) +\frac{2}{7}%
r^{7/2}\cos \left( \frac{7\theta }{2}\right) \\ 
-\frac{2}{3}r^{3/2}\sin \left( \frac{3\theta }{2}\right) +\frac{2}{7}%
r^{7/2}\sin \left( \frac{7\theta }{2}\right) \\ 
\frac{4}{5}r^{5/2}\cos \left( \frac{5\theta }{2}\right)%
\end{array}%
\right) ,
\end{equation*}%
and $\left( \mathfrak{F},\mathcal{G}\right) =\left( \zeta ^{1/2},\zeta
\right) $ in Minkowski 3-space, where $r\in \lbrack -1,1]$, $\theta \in
\lbrack -2\pi ,2\pi ].$
\end{example}

\begin{center}
\begin{equation*}
\FRAME{itbpF}{1.7781in}{1.7962in}{0in}{}{}{Figure}{\special{language
"Scientific Word";type "GRAPHIC";display "USEDEF";valid_file "T";width
1.7781in;height 1.7962in;depth 0in;original-width 3.8121in;original-height
4.094in;cropleft "0";croptop "1";cropright "1";cropbottom "0";tempfilename
'MG5RC31C.wmf';tempfile-properties "XPR";}}\FRAME{itbpF}{2.0159in}{1.7625in}{%
0in}{}{}{Figure}{\special{language "Scientific Word";type
"GRAPHIC";maintain-aspect-ratio TRUE;display "USEDEF";valid_file "T";width
2.0159in;height 1.7625in;depth 0in;original-width 3.531in;original-height
3.0831in;cropleft "0";croptop "1";cropright "1";cropbottom "0";tempfilename
'MG5RC31D.wmf';tempfile-properties "XPR";}}
\end{equation*}%
$\left( a\right) $ \ \ \ \ \ \ \ \ \ \ \ \ \ \ \ \ \ \ \ \ \ \ \ \ \ \ \ \ \
\ \ $\left( b\right) $ \ 

Figure 12 $\ $Maximal surface $\mathfrak{B}_{5/2}$
\end{center}

\begin{example}
If $m=4$, then we have (see Fig. 13)%
\begin{equation*}
\mathfrak{B}_{4}\left( r,\theta \right) =\left( 
\begin{array}{c}
\frac{1}{3}r^{3}\cos \left( 3\theta \right) +\frac{1}{5}r^{5}\cos \left(
5\theta \right) \\ 
-\frac{1}{3}r^{3}\sin \left( 3\theta \right) +\frac{1}{5}r^{5}\sin \left(
5\theta \right) \\ 
\frac{1}{2}r^{4}\cos \left( 4\theta \right)%
\end{array}%
\right) ,
\end{equation*}%
$\allowbreak $ and $\left( \mathfrak{F},\mathcal{G}\right) =\left( \zeta
^{2},\zeta \right) $ in Minkowski 3-space, where $r\in \lbrack -1,1]$, $%
\theta \in \lbrack 0,2\pi ].$
\end{example}

\begin{center}
\begin{equation*}
\FRAME{itbpF}{1.7426in}{1.8005in}{0in}{}{}{Figure}{\special{language
"Scientific Word";type "GRAPHIC";maintain-aspect-ratio TRUE;display
"USEDEF";valid_file "T";width 1.7426in;height 1.8005in;depth
0in;original-width 3.0934in;original-height 3.1981in;cropleft "0";croptop
"1";cropright "1";cropbottom "0";tempfilename
'MG5RC31E.wmf';tempfile-properties "XPR";}}\FRAME{itbpF}{2.0557in}{1.7365in}{%
0in}{}{}{Figure}{\special{language "Scientific Word";type
"GRAPHIC";maintain-aspect-ratio TRUE;display "USEDEF";valid_file "T";width
2.0557in;height 1.7365in;depth 0in;original-width 3.1254in;original-height
2.6351in;cropleft "0";croptop "1";cropright "1";cropbottom "0";tempfilename
'MG5RC31F.wmf';tempfile-properties "XPR";}}
\end{equation*}%
$\left( a\right) $ \ \ \ \ \ \ \ \ \ \ \ \ \ \ \ \ \ \ \ \ \ \ \ \ \ \ \ \ \
\ \ $\left( b\right) $ \ 

Figure 13 $\ $Maximal surface $\mathfrak{B}_{4}$
\end{center}

\section{Bour's timelike minimal surface $\mathfrak{B}_{m}$}

\subsection{Indefinite case}

Let $\mathbb{L}^{2}=\left( \mathbb{R}^{2},-dx^{2}+dy^{2}\right) $ be
Minkowski plane, and $\mathbb{L}^{3}$ be a $3$-dimensional Minkowski space
with natural Lorentzian metric%
\begin{equation*}
\left\langle \text{ }.\text{ },\text{ }.\text{ }\right\rangle
=-dx^{2}+dy^{2}+dz^{2}.
\end{equation*}

\begin{theorem}
(\textit{Weierstrass }representation for timelike minimal surfaces in $%
\mathbb{L}^{3}$\textit{) }Let $\mathbf{x}:\mathbf{M}\rightarrow \mathbb{L}%
^{3}$ be a timelike surface parametrized by null coordinates $(u,v)$, where $%
u:=-x+y$, $v:=x+y$. Timelike minimal surface is represented by%
\begin{equation}
\mathbf{x}(u,v)=\int^{u}\left( 
\begin{array}{c}
-\emph{f}\left( 1+\emph{g}^{2}\right) \\ 
\emph{f}\left( 1-\emph{g}^{2}\right) \\ 
2\emph{fg}%
\end{array}%
\right) du+\int^{v}\left( 
\begin{array}{c}
\mathfrak{f}\left( 1+\mathfrak{g}^{2}\right) \\ 
\mathfrak{f}\left( 1-\mathfrak{g}^{2}\right) \\ 
2\mathfrak{fg}%
\end{array}%
\right) dv.  \label{10}
\end{equation}
\end{theorem}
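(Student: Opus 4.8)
The plan is to carry out the Lorentzian (split-complex) analogue of the classical Weierstrass construction, with null coordinates playing the role of isothermal ones. First I would translate the hypothesis: to say that the timelike immersion $\mathbf{x}:\mathbf{M}\to\mathbb{L}^{3}$ is parametrized by null coordinates $(u,v)$ means exactly that $\langle\mathbf{x}_{u},\mathbf{x}_{u}\rangle=\langle\mathbf{x}_{v},\mathbf{x}_{v}\rangle=0$ while $F:=\langle\mathbf{x}_{u},\mathbf{x}_{v}\rangle\neq0$, so that the induced metric is $2F\,du\,dv$, which has determinant $-F^{2}<0$ and hence is Lorentzian; on a simply connected $\mathbf{M}$ such coordinates exist, by the Lorentzian counterpart of the existence of isothermal parameters. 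Differentiating $\langle\mathbf{x}_{u},\mathbf{x}_{u}\rangle=0$ in $v$ and $\langle\mathbf{x}_{v},\mathbf{x}_{v}\rangle=0$ in $u$ shows $\langle\mathbf{x}_{uv},\mathbf{x}_{u}\rangle=\langle\mathbf{x}_{uv},\mathbf{x}_{v}\rangle=0$, so $\mathbf{x}_{uv}$ is normal to the surface; since in these coordinates the Laplace--Beltrami operator is $\tfrac{2}{F}\partial_{u}\partial_{v}$, the mean curvature vector is a nonzero scalar multiple of $\mathbf{x}_{uv}$, and therefore $\mathbf{x}$ is minimal if and only if $\mathbf{x}_{uv}=0$.

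Next I would exploit $\mathbf{x}_{uv}=0$: it forces $\mathbf{x}(u,v)=\phi(u)+\psi(v)$ for $\mathbb{L}^{3}$-valued curves $\phi$ and $\psi$, and the null conditions then read $\langle\phi'(u),\phi'(u)\rangle=0$ and $\langle\psi'(v),\psi'(v)\rangle=0$ for all $u,v$. In $\mathbb{L}^{3}$ with metric $-dx^{2}+dy^{2}+dz^{2}$ a vector $(a,b,c)$ is null precisely when $a^{2}=b^{2}+c^{2}$. The key algebraic step is the rational parametrization of this null cone: from the identity
\begin{equation*}
-(1+g^{2})^{2}+(1-g^{2})^{2}+4g^{2}=0
\end{equation*}
any triple $\bigl(-f(1+g^{2}),\,f(1-g^{2}),\,2fg\bigr)$ is null, and conversely a null vector with $b\neq a$ is represented uniquely by $f=(b-a)/2$, $g=c/(b-a)$. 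Applying this with functions $f(u),g(u)$ expresses $\phi'(u)$, and applying it on the opposite branch of the cone --- which produces the sign change in the first coordinate --- with functions $\mathfrak{f}(v),\mathfrak{g}(v)$ expresses $\psi'(v)$; integrating in $u$ and in $v$ respectively yields exactly formula \eqref{10}.

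It then remains to verify that \eqref{10} describes precisely the timelike minimal surfaces. Any $\mathbf{x}$ defined by \eqref{10} satisfies $\mathbf{x}_{uv}=0$ and has null $\mathbf{x}_{u},\mathbf{x}_{v}$ by construction, so it is minimal wherever it is a regular timelike immersion; a direct computation gives
\begin{equation*}
F=\langle\mathbf{x}_{u},\mathbf{x}_{v}\rangle=2f\mathfrak{f}\,(1+g\mathfrak{g})^{2},
\end{equation*}
so the induced metric $2F\,du\,dv$ is nondegenerate and Lorentzian exactly on the open set where $f\mathfrak{f}\neq0$ and $g\mathfrak{g}\neq-1$, which is the assertion that $\mathbf{x}$ is timelike there. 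Combined with the first two paragraphs, this gives the desired equivalence.

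I expect the genuine obstacle to be the bookkeeping around degeneracies rather than any conceptual difficulty: one must justify the global existence of null coordinates on the simply connected $\mathbf{M}$, pin down the precise relation between the Lorentzian mean curvature vector and $\mathbf{x}_{uv}$ (including the $\epsilon=\langle e,e\rangle$ signs encountered in the definite case above), and excise the locus $\{f\mathfrak{f}=0\}\cup\{g\mathfrak{g}=-1\}$ on which the patch fails to be a regular timelike immersion. Beyond that, the whole argument is the mirror of the Euclidean Weierstrass representation under the substitution of the split-complex variable $u+jv$ with $j^{2}=1$ for $z=u+iv$, which is why a single ``holomorphic'' datum automatically separates into a $u$-part $(f,g)$ and a $v$-part $(\mathfrak{f},\mathfrak{g})$.
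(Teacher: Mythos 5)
Your argument is correct and follows exactly the route the paper itself relies on: the paper states this theorem without proof, merely recording the inverse formulas $\emph{f}=(-\phi_{1}+\phi_{2})/2$, $\emph{g}=\phi_{3}/(-\phi_{1}+\phi_{2})$, $\mathfrak{f}=(\mu_{1}+\mu_{2})/2$, $\mathfrak{g}=\mu_{3}/(\mu_{1}+\mu_{2})$ together with the decomposition $\mathbf{x}=\Omega(u)+\Psi(v)$ into null curves, and citing Magid (and Inoguchi--Lee) for the details. Your d'Alembert step ($H=0$ in null coordinates if and only if $\mathbf{x}_{uv}=0$), the rational parametrization of the null cone via $-(1+g^{2})^{2}+(1-g^{2})^{2}+4g^{2}=0$, and the regularity computation $F=2\emph{f}\mathfrak{f}(1+\emph{g}\mathfrak{g})^{2}$ are precisely those omitted details, and they check out.
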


The functions $\emph{f}(u)\emph{,}$ $\emph{g}(u)$, $\mathfrak{f}(v)$ and $%
\mathfrak{g}(v)$ are defined by%
\begin{equation*}
\emph{f}=\frac{-\phi _{1}+\phi _{2}}{2},\text{ }\emph{g}=\frac{\phi _{3}}{%
-\phi _{1}+\phi _{2}},
\end{equation*}%
\begin{equation*}
\mathfrak{f}=\frac{\mu _{1}+\mu _{2}}{2},\text{ }\mathfrak{g}=\frac{\mu _{3}%
}{\mu _{1}+\mu _{2}},
\end{equation*}%
and $\phi =\left( \phi _{1},\phi _{2},\phi _{3}\right) ,$ $\mu =\left( \mu
_{1},\mu _{2},\mu _{3}\right) $ vector valued functions, $\phi \left(
u\right) :=\mathbf{x}_{u},$ $\mu \left( v\right) :=\mathbf{x}_{v}$ satisfy%
\begin{equation*}
\left( \phi \right) ^{2}=0,\text{ }\left( \mu \right) ^{2}=0.
\end{equation*}%
Hence, the timelike minimal surface has the form%
\begin{eqnarray*}
\mathbf{x}(u,v) &=&\int^{u}\phi \left( u\right) du+\int^{v}\mu \left(
v\right) dv \\
&=&\Omega \left( u\right) +\Psi \left( v\right) ,
\end{eqnarray*}%
and its conjugate%
\begin{equation*}
\mathbf{x}^{\ast }(u,v)=\Omega \left( u\right) -\Psi \left( v\right) ,
\end{equation*}%
where $\phi \left( u\right) $ and $\mu \left( v\right) $ are linearly
independent, $\Omega \left( u\right) $ and $\Psi \left( v\right) $ are null
curves in $\mathbb{L}^{3}$. Weierstrass formula for the timelike minimal
surfaces obtained by M. Magid \cite{Magid} (see also \cite{Inoguchi and Lee}%
, for details).

\begin{lemma}
The Weierstrass patch determined by the functions%
\begin{equation*}
\left( \emph{f}(u)\emph{,g}(u)\right) =\left( u^{m-2},u\right) \text{ \ and
\ }\left( \mathfrak{f}(v),\mathfrak{g}(v)\right) =\left( v^{m-2},v\right)
\end{equation*}%
is a representation of the Bour's timelike minimal surface of value $m$ in $%
\mathbb{L}^{3}$, where $m\in \mathbb{R}.$
\end{lemma}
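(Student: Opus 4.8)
The plan is to substitute the prescribed data into the Weierstrass formula \eqref{10}, integrate termwise, and then pass to polar-type null coordinates in order to recognize the Bour form.

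First I would record the two generating vectors obtained by inserting $(u^{m-2},u)$ into the first integrand and $(v^{m-2},v)$ into the second integrand of \eqref{10}:
\[
\phi (u)=\left( -u^{m-2}\left( 1+u^{2}\right) ,\ u^{m-2}\left( 1-u^{2}\right) ,\ 2u^{m-1}\right) ,\qquad \mu (v)=\left( v^{m-2}\left( 1+v^{2}\right) ,\ v^{m-2}\left( 1-v^{2}\right) ,\ 2v^{m-1}\right) .
\]
The algebraic identity $-\left( 1+t^{2}\right) ^{2}+\left( 1-t^{2}\right) ^{2}+4t^{2}=0$ shows at once that $\left( \phi \right) ^{2}=\left( \mu \right) ^{2}=0$, so the hypotheses behind \eqref{10} (Magid's representation) are met as soon as $\phi $ and $\mu $ are linearly independent. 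To secure that I would compute the mixed product
\[
\left\langle \phi ,\mu \right\rangle =2\left( uv\right) ^{m-2}\left( 1+uv\right) ^{2},
\]
which vanishes only on the loci $uv=0$ and $uv=-1$; away from these the induced metric $2\left\langle \phi ,\mu \right\rangle \,du\,dv$ is Lorentzian, hence $\mathbf{x}$ is a genuine timelike minimal surface. This nondegeneracy computation is the one step beyond routine bookkeeping, and it plays here the role that $\left\vert \mathcal{G}\right\vert \neq 1$, i.e.\ $r^{2}\neq 1$, plays in the definite case.

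Next I would integrate \eqref{10} componentwise. For $m\in \mathbb{R}-\left\{ -1,0,1\right\} $ each primitive is again a power, and after regrouping one obtains
\[
\mathbf{x}(u,v)=\left(
\begin{array}{c}
\frac{v^{m-1}-u^{m-1}}{m-1}+\frac{v^{m+1}-u^{m+1}}{m+1} \\
\frac{u^{m-1}+v^{m-1}}{m-1}-\frac{u^{m+1}+v^{m+1}}{m+1} \\
\frac{2}{m}\left( u^{m}+v^{m}\right)
\end{array}
\right) .
\]
The values $m\in \left\{ -1,0,1\right\} $ are precisely those producing logarithmic primitives, and for non-integer $m$ one restricts to a simply connected domain on which the fractional powers are single-valued (for instance $u,v>0$), in accordance with the standing hypotheses. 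Finally I would introduce the polar-type null coordinates $u=re^{\theta }$, $v=re^{-\theta }$, the Lorentzian analogue of $\zeta =re^{i\theta }$, and use $u^{k}\pm v^{k}=r^{k}\left( e^{k\theta }\pm e^{-k\theta }\right) $ to convert the three components into combinations of $\cosh $ and $\sinh $ of $\left( m-1\right) \theta $, $m\theta $ and $\left( m+1\right) \theta $, with coefficients $\tfrac{r^{m-1}}{m-1}$, $\tfrac{r^{m}}{m}$ and $\tfrac{r^{m+1}}{m+1}$ up to an inessential common factor. This is exactly the indefinite counterpart of the Bour form \eqref{7}, so the patch represents Bour's timelike minimal surface of value $m$, and reading off the components in these coordinates yields its explicit expression.

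I do not anticipate a genuine obstacle: apart from the branch and domain conventions for fractional $m$ and the exclusion of $m\in \left\{ -1,0,1\right\} $, the argument is the direct Lorentzian transcription of the computation behind the definite case, the single substantive ingredient being the identity $\left\langle \phi ,\mu \right\rangle =2\left( uv\right) ^{m-2}\left( 1+uv\right) ^{2}$, which guarantees that the image is timelike rather than lightlike.
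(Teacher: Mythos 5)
Your proposal is correct, and it actually does more work than the paper does at this point: the paper offers no separate proof of this lemma, but simply inserts $(u^{m-2},u)$ and $(v^{m-2},v)$ into \eqref{10}, integrates termwise to arrive at \eqref{11}, and then defers the verification that the result is timelike and minimal to the following theorem, where it is checked through the coefficients $E,F,G,L,M,N$ after the substitution $u=r\cos \theta $, $v=r\sin \theta $ (which is how the paper produces its $(r,\theta )$ formulas with the powers $\cos ^{m\pm 1}\theta $, $\sin ^{m\pm 1}\theta $, e.g.\ \eqref{12}). You instead verify the hypotheses of Magid's representation directly at the level of the Weierstrass data: the nullity $\left( \phi \right) ^{2}=\left( \mu \right) ^{2}=0$ via $-\left( 1+t^{2}\right) ^{2}+\left( 1-t^{2}\right) ^{2}+4t^{2}=0$, and the nondegeneracy $\left\langle \phi ,\mu \right\rangle =2\left( uv\right) ^{m-2}\left( 1+uv\right) ^{2}$, which identifies exactly the degenerate loci $uv=0$, $uv=-1$ and immediately gives $\det I=-F^{2}<0$; this is consistent with (and cleaner than) the paper's later computation, whose $m=3$ specialization $F=2uv\left( 1+uv\right) ^{2}$ you reproduce. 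Your integrated expression agrees with \eqref{11}, and your remark about excluding $m\in \left\{ -1,0,1\right\} $ matches the hypothesis of the paper's theorem (the lemma's blanket ``$m\in \mathbb{R}$'' is too generous as stated). The one divergence is the final identification step: you pass to hyperbolic null polar coordinates $u=re^{\theta }$, $v=re^{-\theta }$, obtaining a $\cosh$/$\sinh$ analogue of \eqref{7}, whereas the paper's named surface $\mathfrak{B}_{m}\left( r,\theta \right) $ comes from $u=r\cos \theta $, $v=r\sin \theta $ and does not have the multiple-angle form. This is only a difference in which reparametrization of the patch \eqref{11} one displays (yours covers the region $uv>0$), not a mathematical error, but if you want your conclusion to match the paper's explicit formulas you should either use the circular substitution or note that both parametrize the same $(u,v)$ surface \eqref{11}.
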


Bour's timelike minimal surface of value $m$ is%
\begin{equation*}
\int^{u}\left( 
\begin{array}{c}
-u^{m-2}\left( 1+u^{2}\right) \\ 
u^{m-2}\left( 1-u^{2}\right) \\ 
2u^{m-1}%
\end{array}%
\right) du+\int^{v}\left( 
\begin{array}{c}
v^{m-2}\left( 1+v^{2}\right) \\ 
v^{m-2}\left( 1-v^{2}\right) \\ 
2v^{m-1}%
\end{array}%
\right) dv,
\end{equation*}%
and it has the form%
\begin{equation}
\mathfrak{B}_{m}(u,v)=\left( 
\begin{array}{c}
-\frac{1}{m-1}\left( u^{m-1}-v^{m-1}\right) -\frac{1}{m+1}\left(
u^{m+1}-v^{m+1}\right) \\ 
\frac{1}{m-1}\left( u^{m-1}+v^{m-1}\right) -\frac{1}{m+1}\left(
u^{m+1}+v^{m+1}\right) \\ 
2\frac{1}{m}\left( u^{m}+v^{m}\right)%
\end{array}%
\right) .  \label{11}
\end{equation}%
Therefore, $\mathfrak{B}_{m}\left( r,\theta \right) $ is%
\begin{eqnarray*}
x &=&-\frac{r^{m-1}}{m-1}\left( \cos ^{\left( m-1\right) }\left( \theta
\right) -\sin ^{\left( m-1\right) }\left( \theta \right) \right) \\
&&-\frac{r^{m+1}}{m+1}\left( \cos ^{\left( m+1\right) }\left( \theta \right)
-\sin ^{\left( m+1\right) }\left( \theta \right) \right) ,
\end{eqnarray*}%
\begin{eqnarray*}
y &=&\frac{r^{m-1}}{m-1}\left( \cos ^{\left( m-1\right) }\left( \theta
\right) +\sin ^{\left( m-1\right) }\left( \theta \right) \right) \\
&&-\frac{r^{m+1}}{m+1}\left( \cos ^{\left( m+1\right) }\left( \theta \right)
+\sin ^{\left( m+1\right) }\left( \theta \right) \right) ,
\end{eqnarray*}%
\begin{equation*}
z\text{ }=\text{ }2\frac{r^{m}}{m}\left( \cos ^{m}\left( \theta \right)
+\sin ^{m}\left( \theta \right) \right) .\text{ \ \ \ \ \ \ \ \ \ \ \ \ \ \
\ }
\end{equation*}

\begin{theorem}
Bour's surface $\mathfrak{B}_{m}\left( r,\theta \right) $ is a timelike
minimal surface in $\mathbb{L}^{3}$, where $m\in \mathbb{R}-\left\{
-1,0,1\right\} $.
\end{theorem}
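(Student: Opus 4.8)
The plan is to verify the two defining properties directly from the null-coordinate expression \eqref{11}: that the induced metric is Lorentzian (so the surface is timelike) and that the mean curvature vanishes identically. Since the $(r,\theta)$ form is obtained from \eqref{11} by the substitution $u=r\cos\theta$, $v=r\sin\theta$, which is a diffeomorphism wherever $r\neq 0$, it is enough to work in the coordinates $(u,v)$.

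First I would read off the tangent vectors from the integrands of \eqref{11}, namely $\mathbf{x}_{u}=\phi(u)$ and $\mathbf{x}_{v}=\mu(v)$ with
\begin{equation*}
\phi(u)=\bigl(-u^{m-2}(1+u^{2}),\ u^{m-2}(1-u^{2}),\ 2u^{m-1}\bigr),\qquad
\mu(v)=\bigl(v^{m-2}(1+v^{2}),\ v^{m-2}(1-v^{2}),\ 2v^{m-1}\bigr),
\end{equation*}
and compute the first fundamental form with respect to $\langle\,\cdot\,,\,\cdot\,\rangle=-dx^{2}+dy^{2}+dz^{2}$. A short expansion gives $E=\langle\phi,\phi\rangle=0$ and $G=\langle\mu,\mu\rangle=0$ — precisely the conditions $(\phi)^{2}=(\mu)^{2}=0$ underlying the Weierstrass data of the preceding lemma — together with $F=\langle\phi,\mu\rangle=2u^{m-2}v^{m-2}(1+uv)^{2}$. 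Hence
\begin{equation*}
\det I=EG-F^{2}=-4\,u^{2m-4}v^{2m-4}(1+uv)^{4}<0
\end{equation*}
on the region where $u,v\neq 0$ and $1+uv\neq 0$, so the induced metric is Lorentzian and $\mathfrak{B}_{m}$ is a regular timelike surface there.

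Next I would introduce the unit normal $e$, which is spacelike because the surface is timelike, obtained by normalizing the Lorentzian cross product $\phi\times\mu$; one then has $\langle e,e\rangle=1$. The key point is that \eqref{11} has the split form $\mathbf{x}(u,v)=\Omega(u)+\Psi(v)$, so $\mathbf{x}_{uv}\equiv 0$ and therefore the mixed second-fundamental-form coefficient is $M=\langle\mathbf{x}_{uv},e\rangle=0$. For a timelike surface written in null coordinates the mean curvature is $H=(EN-2FM+GL)/\bigl(2(EG-F^{2})\bigr)$, and since $E=G=M=0$ the numerator vanishes identically, whence $H\equiv 0$. (For completeness one can also record $L=\langle\phi'(u),e\rangle$ and $N=\langle\mu'(v),e\rangle$ and read off the Gaussian curvature, but these are irrelevant to the minimality claim.) Combining the two paragraphs shows that $\mathfrak{B}_{m}$ is a timelike minimal surface; the exclusion $m\in\mathbb{R}-\{-1,0,1\}$ is exactly what is needed for the antiderivatives $\tfrac{u^{m-1}}{m-1}$, $\tfrac{u^{m+1}}{m+1}$, $\tfrac{u^{m}}{m}$ appearing in \eqref{11} to be defined.

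The computations are routine; the only step that deserves attention is the evaluation of $F$ and the observation that it does not vanish, i.e. that $1+uv\neq 0$ on the parameter domain (the indefinite-case counterpart of the hypothesis $\lvert\mathcal{G}\rvert\neq 1$ in the Weierstrass representation). This is what guarantees $\det I\neq 0$, so that the surface is nondegenerate and genuinely timelike rather than lightlike; everything else is direct substitution into the standard curvature formulas.
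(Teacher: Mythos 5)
Your proposal is correct, but it proves the theorem by a genuinely different route than the paper. The paper works directly in the $(r,\theta)$ parametrization: it computes the coefficients $E,F,G$ of the first fundamental form there, checks $\det I<0$, then writes down the Gauss map explicitly, computes $L,M,N$ and $\det II$, and finally reads off $K$ and $H=0$ from the curvature formulas. You instead stay in the null coordinates $(u,v)$ of \eqref{11} and let the structure do the work: $\langle\phi,\phi\rangle=\langle\mu,\mu\rangle=0$ gives $E=G=0$, the cross term $F=2u^{m-2}v^{m-2}(1+uv)^{2}$ gives $\det I=-F^{2}<0$ (timelike), and the translation-surface form $\mathbf{x}=\Omega(u)+\Psi(v)$ forces $\mathbf{x}_{uv}=0$, hence $M=0$, so the numerator $EN-2FM+GL$ of $H$ vanishes identically without ever computing the Gauss map or $L,N$. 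This is essentially the computation the paper only carries out for the special case $m=3$ in $(u,v)$ coordinates, promoted to general $m$; the transfer to the $(r,\theta)$ statement via $u=r\cos\theta$, $v=r\sin\theta$ is legitimate since timelikeness and minimality are reparametrization-invariant away from the singular set. What the paper's approach buys is the explicit Gauss map and the Gaussian curvature in the coordinates of the theorem statement; what yours buys is a shorter, more conceptual proof of $H\equiv 0$ and a cleaner identification of the nondegeneracy conditions ($u,v\neq 0$ and $1+uv\neq 0$, the indefinite analogue of $\lvert\mathcal{G}\rvert\neq 1$), which the paper leaves implicit. The only caveat, shared with the paper, is that for non-integer $m$ the powers $u^{m-2}$, $v^{m-2}$ restrict the admissible parameter domain, and the argument only applies on the open set where the immersion is regular.
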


\begin{proof}
The\textbf{\ }coefficients\textbf{\ }of the\textbf{\ }first fundamental form
of the $\mathfrak{B}_{m}$ are%
\begin{eqnarray*}
E &=&4r^{2m-4}\left( \sin \theta \cos \theta \right) ^{m-1}\left(
1+r^{2}\sin \theta \cos \theta \right) ^{2}, \\
F &=&2r^{2m-3}\left( \sin \theta \cos \theta \right) ^{m-2}\left(
1+r^{2}\sin \theta \cos \theta \right) ^{2}\cos \left( 2\theta \right) , \\
G &=&-4r^{2m-2}\left( \sin \theta \cos \theta \right) ^{m-1}\left(
1+r^{2}\sin \theta \cos \theta \right) ^{2}.
\end{eqnarray*}%
Then we have%
\begin{equation*}
\det I=-\left[ 2r^{2m-3}\left( \sin \theta \cos \theta \right) ^{m-2}\left(
1+r^{2}\sin \theta \cos \theta \right) ^{2}\right] ^{2}.
\end{equation*}%
So, $\mathfrak{B}_{m}$ is a timelike surface. The Gauss map is%
\begin{equation*}
e=\frac{1}{1+r^{2}\sin \theta \cos \theta }\left( 
\begin{array}{c}
r\left( \cos \theta -\sin \theta \right) \\ 
r\left( \cos \theta +\sin \theta \right) \\ 
r^{2}\cos \theta \sin \theta -1%
\end{array}%
\right) .
\end{equation*}%
The\textbf{\ }coefficients of the\ second fundamental form of the surface are%
\begin{eqnarray*}
L &=&-2r^{m-2}(\sin ^{m}\left( \theta \right) +\cos ^{m}\left( \theta
\right) ), \\
M &=&2r^{m-1}(\sin \left( \theta \right) \cos ^{m-1}\left( \theta \right)
-\cos \left( \theta \right) \sin ^{m-1}\left( \theta \right) ), \\
N &=&-2r^{m}(\sin ^{2}\left( \theta \right) \cos ^{m-2}\left( \theta \right)
+\cos ^{2}\left( \theta \right) \sin ^{m-2}\left( \theta \right) ).
\end{eqnarray*}%
We have%
\begin{equation*}
\det II=-4r^{2m-2}\left( \sin \theta \cos \theta \right) ^{m-2}.
\end{equation*}%
Hence, the Gaussian curvature and the mean curvature, respectively, are%
\begin{equation*}
K=\left( \sin \theta \cos \theta \right) ^{2-m}\left( \frac{r^{2-m}}{\left(
1+r^{2}\sin \theta \cos \theta \right) ^{2}}\right) ^{2},
\end{equation*}%
and%
\begin{equation*}
H=0.
\end{equation*}%
So, the $\mathfrak{B}_{m}$ is a timelike minimal surface in $\mathbb{L}^{3}.$
\end{proof}

\subsection{Bour's timelike surface $\mathfrak{B}_{3}$}

If take $m=3$ in $\mathfrak{B}_{m}\left( r,\theta \right) $, we have \textit{%
Bour's timelike minimal surface}\textbf{\ }(see Fig. 14)%
\begin{equation}
\mathfrak{B}_{3}\left( r,\theta \right) =\left( 
\begin{array}{c}
\left( -\frac{r^{2}}{2}-\frac{r^{4}}{4}\right) \cos \left( 2\theta \right)
\\ 
\frac{r^{2}}{2}-\frac{r^{4}}{4}\left( \cos ^{4}\left( \theta \right) +\sin
^{4}\left( \theta \right) \right) \\ 
2\frac{r^{3}}{3}\left( \cos ^{3}\left( \theta \right) +\sin ^{3}\left(
\theta \right) \right)%
\end{array}%
\right)  \label{12}
\end{equation}%
in Minkowski 3-space, where $r\in \lbrack -1,1]$, $\theta \in \lbrack 0,\pi
].$

\begin{center}
\begin{equation*}
\FRAME{itbpF}{1.7729in}{1.8965in}{0in}{}{}{Figure}{\special{language
"Scientific Word";type "GRAPHIC";display "USEDEF";valid_file "T";width
1.7729in;height 1.8965in;depth 0in;original-width 3.1566in;original-height
3.8337in;cropleft "0";croptop "1";cropright "1";cropbottom "0";tempfilename
'MG5RC31G.wmf';tempfile-properties "XPR";}}\FRAME{itbpF}{1.8732in}{1.8983in}{%
0in}{}{}{Figure}{\special{language "Scientific Word";type "GRAPHIC";display
"USEDEF";valid_file "T";width 1.8732in;height 1.8983in;depth
0in;original-width 3.5103in;original-height 3.1877in;cropleft "0";croptop
"1";cropright "1";cropbottom "0";tempfilename
'MG5RC31H.wmf';tempfile-properties "XPR";}}
\end{equation*}

$\left( a\right) $ \ \ \ \ \ \ \ \ \ \ \ \ \ \ \ \ \ \ \ \ \ \ \ \ \ \ \ \ \
\ \ \ \ $\left( b\right) $ \ \ 

Figure 14 $\ $Bour's timelike\textbf{\ }minimal surface $\mathfrak{B}%
_{3}\left( r,\theta \right) $
\end{center}

The\textbf{\ }coefficients\textbf{\ }of the\textbf{\ }first fundamental form
of the Bour's timelike minimal surface of value $3$ are%
\begin{eqnarray*}
E &=&4r^{2}\left( \sin \theta \cos \theta \right) ^{2}\left( 1+r^{2}\sin
\theta \cos \theta \right) ^{2}, \\
F &=&r^{3}\sin 2\theta \left( 1+r^{2}\sin \theta \cos \theta \right)
^{2}\cos \left( 2\theta \right) , \\
G &=&-4r^{4}\left( \sin \theta \cos \theta \right) ^{2}\left( 1+r^{2}\sin
\theta \cos \theta \right) ^{2}.
\end{eqnarray*}%
Then%
\begin{equation*}
\det I=-4r^{6}\left( \sin \theta \cos \theta \right) ^{2}\left( 1+r^{2}\sin
\theta \cos \theta \right) ^{4}.
\end{equation*}%
The\textbf{\ }coefficients of the\ second fundamental form of the surface are%
\begin{eqnarray*}
L &=&-2r(\sin ^{3}\left( \theta \right) +\cos ^{3}\left( \theta \right) ), \\
M &=&2r^{2}(\sin \left( \theta \right) \cos ^{2}\left( \theta \right) -\cos
\left( \theta \right) \sin ^{2}\left( \theta \right) ), \\
N &=&-2r^{3}(\sin ^{2}\left( \theta \right) \cos \left( \theta \right) +\cos
^{2}\left( \theta \right) \sin \left( \theta \right) ).
\end{eqnarray*}%
So,%
\begin{equation*}
\det II=-4r^{4}\sin \theta \cos \theta .
\end{equation*}%
The mean and the Gaussian curvatures of the Bour's minimal surface of value $%
3$ are, respectively,%
\begin{equation*}
H=0,\text{ \ }K=\frac{1}{r^{2}\sin \theta \cos \theta \left( 1+r^{2}\sin
\theta \cos \theta \right) ^{4}}.
\end{equation*}

The Weierstrass patch determined by the functions%
\begin{equation*}
(\emph{f,g})=(u,u)\ \ \ \text{and \ }\ (\mathfrak{f},\mathfrak{g})=(v,v)
\end{equation*}%
in $\mathbb{L}^{3}.$

The parametric form of the surface (see Fig. 15) is%
\begin{equation}
\mathfrak{B}_{3}\left( u,v\right) =\left( 
\begin{array}{c}
-\frac{1}{2}\left( u^{2}-v^{2}\right) -\frac{1}{4}\left( u^{4}-v^{4}\right)
\\ 
\frac{1}{2}\left( u^{2}+v^{2}\right) -\frac{1}{4}\left( u^{4}+v^{4}\right)
\\ 
\frac{2}{3}\left( u^{3}+v^{3}\right)%
\end{array}%
\right) ,  \label{13}
\end{equation}%
where $u,v\in I\subset \mathbb{R}$.

\begin{center}
\begin{equation*}
\FRAME{itbpF}{1.7115in}{1.7616in}{0in}{}{}{Figure}{\special{language
"Scientific Word";type "GRAPHIC";maintain-aspect-ratio TRUE;display
"USEDEF";valid_file "T";width 1.7115in;height 1.7616in;depth
0in;original-width 2.885in;original-height 2.9689in;cropleft "0";croptop
"1";cropright "1";cropbottom "0";tempfilename
'MG5RC31I.wmf';tempfile-properties "XPR";}}\FRAME{itbpF}{1.9796in}{1.7634in}{%
0in}{}{}{Figure}{\special{language "Scientific Word";type
"GRAPHIC";maintain-aspect-ratio TRUE;display "USEDEF";valid_file "T";width
1.9796in;height 1.7634in;depth 0in;original-width 3.333in;original-height
2.9689in;cropleft "0";croptop "1";cropright "1";cropbottom "0";tempfilename
'MG5RC31J.wmf';tempfile-properties "XPR";}}
\end{equation*}

$\left( a\right) $ \ \ \ \ \ \ \ \ \ \ \ \ \ \ \ \ \ \ \ \ \ \ \ \ \ \ \ \ \
\ \ \ \ \ \ $\left( b\right) $ \ \ 

Figure 15 $\ $Timelike minimal surface $\mathfrak{B}_{3}\left( u,v\right) ,$ 
$u,v\in \lbrack -1,1]$
\end{center}

The\textbf{\ }coefficients\textbf{\ }of the\textbf{\ }first fundamental form
of the timelike Bour's surface of value $3$ are%
\begin{equation*}
E=0=G,\text{ \ }F=2uv\left( 1+uv\right) ^{2},
\end{equation*}%
So,%
\begin{equation*}
\det I=-4u^{2}v^{2}\left( 1+uv\right) ^{4}.
\end{equation*}%
The Gauss map of the surface $\mathfrak{B}_{3}$ is%
\begin{equation*}
e=\frac{1}{1+uv}\left( u-v,u+v,uv-1\right) .
\end{equation*}%
The\textbf{\ }coefficients of the\ second fundamental form of the surface are%
\begin{equation*}
L=-2u,\text{ }M=0,\text{ }N=-2v.
\end{equation*}%
Then,%
\begin{equation*}
\det II=4uv.
\end{equation*}%
The mean and the Gaussian curvatures of the timelike Bour's minimal surface $%
\mathfrak{B}_{3}$ are%
\begin{equation*}
H=0,\text{ \ }K=-\frac{1}{uv\left( 1+uv\right) ^{4}}.
\end{equation*}

\subsection{Applications of the indefinite case in $\mathbb{L}^{3}$}

\begin{example}
If take $m=2$, we have\textbf{\ }$\mathfrak{B}_{2}\left( r,\theta \right) $
(see Fig. 16)%
\begin{equation*}
\left( 
\begin{array}{c}
-r\left( \cos \left( \theta \right) -\sin \left( \theta \right) \right) -%
\frac{r^{3}}{3}\left( \cos ^{3}\left( \theta \right) -\sin ^{3}\left( \theta
\right) \right) \\ 
r\left( \cos \left( \theta \right) +\sin \left( \theta \right) \right) -%
\frac{r^{3}}{3}\left( \cos ^{3}\left( \theta \right) +\sin ^{3}\left( \theta
\right) \right) \\ 
r^{2}%
\end{array}%
\right)
\end{equation*}%
in Minkowski 3-space, where $r\in \lbrack -2,2]$, $\theta \in \lbrack -\pi
/2,\pi /2].$
\end{example}

\begin{center}
\begin{equation*}
\FRAME{itbpF}{1.8792in}{1.8325in}{0in}{}{}{Figure}{\special{language
"Scientific Word";type "GRAPHIC";maintain-aspect-ratio TRUE;display
"USEDEF";valid_file "T";width 1.8792in;height 1.8325in;depth
0in;original-width 3.3122in;original-height 3.2292in;cropleft "0";croptop
"1";cropright "1";cropbottom "0";tempfilename
'MG5RC31K.wmf';tempfile-properties "XPR";}}\FRAME{itbpF}{1.9182in}{1.8256in}{%
0in}{}{}{Figure}{\special{language "Scientific Word";type
"GRAPHIC";maintain-aspect-ratio TRUE;display "USEDEF";valid_file "T";width
1.9182in;height 1.8256in;depth 0in;original-width 2.9585in;original-height
2.8124in;cropleft "0";croptop "1";cropright "1";cropbottom "0";tempfilename
'MG5RC31L.wmf';tempfile-properties "XPR";}}
\end{equation*}

$\left( a\right) $ \ \ \ \ \ \ \ \ \ \ \ \ \ \ \ \ \ \ \ \ \ \ \ \ \ \ \ \ \
\ \ \ \ \ \ \ \ $\left( b\right) $ \ \ 

Figure 16 $\ $Bour's timelike\textbf{\ }minimal surface $\mathfrak{B}%
_{2}\left( r,\theta \right) $
\end{center}

\begin{example}
If take $m=2$, we have $\mathfrak{B}_{2}\left( r,\theta \right) $ (see Fig.
17) in Minkowski 3-space, where $r\in \lbrack -3,3]$, $\theta \in \lbrack
-\pi /2,\pi /2].$
\end{example}

\begin{center}
\begin{equation*}
\FRAME{itbpF}{2.0998in}{1.7513in}{0in}{}{}{Figure}{\special{language
"Scientific Word";type "GRAPHIC";maintain-aspect-ratio TRUE;display
"USEDEF";valid_file "T";width 2.0998in;height 1.7513in;depth
0in;original-width 3.7291in;original-height 3.1038in;cropleft "0";croptop
"1";cropright "1";cropbottom "0";tempfilename
'MG5RC31M.wmf';tempfile-properties "XPR";}}\FRAME{itbpF}{1.9311in}{1.7538in}{%
0in}{}{}{Figure}{\special{language "Scientific Word";type
"GRAPHIC";maintain-aspect-ratio TRUE;display "USEDEF";valid_file "T";width
1.9311in;height 1.7538in;depth 0in;original-width 3.0208in;original-height
2.7397in;cropleft "0";croptop "1";cropright "1";cropbottom "0";tempfilename
'MG5RC31N.wmf';tempfile-properties "XPR";}}
\end{equation*}

$\ \ \left( a\right) $ \ \ \ \ \ \ \ \ \ \ \ \ \ \ \ \ \ \ \ \ \ \ \ \ \ \ \
\ \ \ \ \ \ \ \ $\left( b\right) $

Figure 17 $\ $Bour's timelike\textbf{\ }minimal surface $\mathfrak{B}%
_{2}\left( r,\theta \right) $
\end{center}

\begin{example}
If take $m=4$, we have\textbf{\ }$\mathfrak{B}_{4}\left( r,\theta \right) $
(see Fig. 18)%
\begin{equation*}
\left( 
\begin{array}{c}
-\frac{r^{3}}{3}\left( \cos ^{3}\left( \theta \right) -\sin ^{3}\left(
\theta \right) \right) -\frac{r^{5}}{5}\left( \cos ^{5}\left( \theta \right)
-\sin ^{5}\left( \theta \right) \right) \\ 
\frac{r^{3}}{3}\left( \cos ^{3}\left( \theta \right) +\sin ^{3}\left( \theta
\right) \right) -\frac{r^{5}}{5}\left( \cos ^{5}\left( \theta \right) +\sin
^{5}\left( \theta \right) \right) \\ 
\frac{r^{4}}{4}\left( \cos ^{4}\left( \theta \right) +\sin ^{4}\left( \theta
\right) \right)%
\end{array}%
\right)
\end{equation*}%
in Minkowski 3-space, where $r\in \lbrack -1,1],$ $\theta \in \lbrack 0,\pi
].$
\end{example}

\begin{center}
\begin{equation*}
\FRAME{itbpF}{2.0583in}{1.8983in}{0in}{}{}{Figure}{\special{language
"Scientific Word";type "GRAPHIC";maintain-aspect-ratio TRUE;display
"USEDEF";valid_file "T";width 2.0583in;height 1.8983in;depth
0in;original-width 3.3226in;original-height 3.0623in;cropleft "0";croptop
"1";cropright "1";cropbottom "0";tempfilename
'MG5RC31O.wmf';tempfile-properties "XPR";}}\FRAME{itbpF}{1.9778in}{1.9069in}{%
0in}{}{}{Figure}{\special{language "Scientific Word";type
"GRAPHIC";maintain-aspect-ratio TRUE;display "USEDEF";valid_file "T";width
1.9778in;height 1.9069in;depth 0in;original-width 2.9066in;original-height
2.802in;cropleft "0";croptop "1";cropright "1";cropbottom "0";tempfilename
'MG5RC31P.wmf';tempfile-properties "XPR";}}
\end{equation*}

$\left( a\right) $ \ \ \ \ \ \ \ \ \ \ \ \ \ \ \ \ \ \ \ \ \ \ \ \ \ \ \ \ \
\ \ \ \ \ \ \ \ \ $\left( b\right) $

Figure 18 $\ $Bour's timelike\textbf{\ }minimal surface $\mathfrak{B}%
_{4}\left( r,\theta \right) $
\end{center}

\begin{example}
If take $m=4$, we have $\mathfrak{B}_{2}\left( r,\theta \right) $ (see Fig.
19) in Minkowski 3-space, where $r\in \lbrack -2,2]$, $\theta \in \lbrack
0,\pi /2].$
\end{example}

\begin{center}
\begin{equation*}
\FRAME{itbpF}{2.0453in}{1.6873in}{0in}{}{}{Figure}{\special{language
"Scientific Word";type "GRAPHIC";maintain-aspect-ratio TRUE;display
"USEDEF";valid_file "T";width 2.0453in;height 1.6873in;depth
0in;original-width 3.5518in;original-height 2.9274in;cropleft "0";croptop
"1";cropright "1";cropbottom "0";tempfilename
'MG5RC31Q.wmf';tempfile-properties "XPR";}}\FRAME{itbpF}{1.8498in}{1.7045in}{%
0in}{}{}{Figure}{\special{language "Scientific Word";type
"GRAPHIC";maintain-aspect-ratio TRUE;display "USEDEF";valid_file "T";width
1.8498in;height 1.7045in;depth 0in;original-width 3.0208in;original-height
2.7812in;cropleft "0";croptop "1";cropright "1";cropbottom "0";tempfilename
'MG5RC31R.wmf';tempfile-properties "XPR";}}
\end{equation*}

$\left( a\right) $ \ \ \ \ \ \ \ \ \ \ \ \ \ \ \ \ \ \ \ \ \ \ \ \ \ \ \ \ \
\ \ \ \ \ \ \ $\left( b\right) $

Figure 19 $\ $Bour's timelike\textbf{\ }minimal surface $\mathfrak{B}%
_{4}\left( r,\theta \right) $
\end{center}

\begin{example}
If take $m=5$, we have\textbf{\ }$\mathfrak{B}_{5}\left( r,\theta \right) $
(see Fig. 20)%
\begin{equation*}
\left( 
\begin{array}{c}
-\frac{r^{4}}{4}\left( \cos ^{4}\left( \theta \right) -\sin ^{4}\left(
\theta \right) \right) -\frac{r^{6}}{6}\left( \cos ^{6}\left( \theta \right)
-\sin ^{6}\left( \theta \right) \right) \\ 
\frac{r^{4}}{4}\left( \cos ^{4}\left( \theta \right) +\sin ^{4}\left( \theta
\right) \right) -\frac{r^{6}}{6}\left( \cos ^{6}\left( \theta \right) +\sin
^{6}\left( \theta \right) \right) \\ 
\frac{r^{5}}{5}\left( \cos ^{5}\left( \theta \right) +\sin ^{5}\left( \theta
\right) \right)%
\end{array}%
\right)
\end{equation*}%
in Minkowski 3-space, where $r\in \lbrack -0.003,0.003]$, $\theta \in
\lbrack 0,\pi ].$
\end{example}

\begin{center}
\begin{equation*}
\FRAME{itbpF}{1.8403in}{1.8723in}{0in}{}{}{Figure}{\special{language
"Scientific Word";type "GRAPHIC";maintain-aspect-ratio TRUE;display
"USEDEF";valid_file "T";width 1.8403in;height 1.8723in;depth
0in;original-width 3.5552in;original-height 3.6175in;cropleft "0";croptop
"1";cropright "1";cropbottom "0";tempfilename
'MG5RC31S.bmp';tempfile-properties "XPR";}}\FRAME{itbpF}{1.9588in}{1.8905in}{%
0in}{}{}{Figure}{\special{language "Scientific Word";type
"GRAPHIC";maintain-aspect-ratio TRUE;display "USEDEF";valid_file "T";width
1.9588in;height 1.8905in;depth 0in;original-width 2.9482in;original-height
2.8435in;cropleft "0";croptop "1";cropright "1";cropbottom "0";tempfilename
'MG5RC31T.wmf';tempfile-properties "XPR";}}
\end{equation*}

$\left( a\right) $ \ \ \ \ \ \ \ \ \ \ \ \ \ \ \ \ \ \ \ \ \ \ \ \ \ \ \ \ \
\ \ \ \ \ $\left( b\right) $

Figure 20 $\ $Bour's timelike\textbf{\ }minimal surface $\mathfrak{B}%
_{5}\left( r,\theta \right) $
\end{center}

\textbf{Acknowledgement.} A large part of this work had been completed by
the author, when he visited as a post-doctoral researcher at the Katholieke
Universiteit Leuven, Belgium in 2011-2012 academic year. The author would
like to thanks to the hospitality of the members of the geometry section of
the K.U. Leuven, especially to the Professor Franki Dillen.

\end{document}